\numberwithin{equation}{section}
\numberwithin{figure}{section}
  \theoremstyle{plain}
  \newtheorem*{thm*}{Theorem}
\theoremstyle{plain}
\newtheorem{thm}{Theorem}
  \theoremstyle{plain}
  \newtheorem{lem}[thm]{Lemma}
  \theoremstyle{plain}
  \newtheorem{prop}[thm]{Proposition}
  \theoremstyle{remark}
  \newtheorem*{rem*}{Remark}
\newcommand{\lyxrightaddress}[1]{
\par {\raggedleft \begin{tabular}{l}\ignorespaces
#1
\end{tabular}
\vspace{1.4em}
\par}
}
\begin{document}
\selectlanguage{french}%
\textbf{\hfill{}}\foreignlanguage{english}{\textit{(English Transl.)
Paru à Journal of Mathematical Sciences (New York), 156:5, 819\textendash{}823
(2009).}}

\selectlanguage{english}%
\vspace{0.5cm}

\title{Toeplitz condition numbers as an $H^{\infty}$ interpolation problem}

\maketitle
\begin{center}
{\large Rachid Zarouf}
\par\end{center}{\large \par}
\begin{abstract}
The condition numbers $CN(T)=\left\Vert T\right\Vert .\left\Vert T^{-1}\right\Vert $
of Toeplitz and analytic $n\times n$ matrices $T$ are studied. It
is shown that the supremum of $CN(T)$ over all such matrices with
$\left\Vert T\right\Vert \leq1$ and the given minimum of eigenvalues
$r=min_{i=1..n}\vert\lambda_{i}\vert>0$ behaves as the corresponding
supremum over all $n\times n$ matrices (i.e., as $\frac{1}{r{}^{n}}$
(Kronecker)), and this equivalence is uniform in $n$ and $r$. The
proof is based on a use of the Sarason-Sz.Nagy-Foias commutant lifting
theorem. 
\end{abstract}
{\large Let $H$ be a Hilbert space of finite dimension $n$ and an
invertible operator $T$ acting on $H$ such that $\parallel T\parallel\leq1$.
We are interested in estimating the norm of the inverse of $T:$\[
\parallel T^{-1}\parallel.\]
More precisely, given a family $\mathcal{F}$ of $n-$dimensional
operators and a $T\in\mathcal{F}$, we set \[
r_{min}(T)=min_{i=1..n}\vert\lambda_{i}\vert>0,\]
where $\left\{ \lambda_{1},...,\lambda_{n}\right\} =\sigma(T)$ is
the spectrum of $T$. We are looking for {}``the best possible majorant''
$\Phi_{n}(r)$ such that}{\large \par}

\begin{onehalfspace}
{\large \[
\left\Vert T^{-1}\right\Vert \leq\Phi_{n}(r)\]
for every $T\in\mathcal{F}$, $\left\Vert T\right\Vert \leq1$. Let
$r\in]0,1[.$ This leads us to define the following bound $c_{n}(\mathcal{F},r),$ }{\large \par}
\end{onehalfspace}

\begin{doublespace}
{\large \[
c_{n}(\mathcal{F},r)=sup\left\{ \left\Vert T^{-1}\right\Vert :\: T\in\mathcal{F},\,\left\Vert T\right\Vert \leq1,\: r_{min}(T)\geq r\right\} \]
The following classical result is attributed to Kronecker ( XIX c.)}{\large \par}
\end{doublespace}
\begin{thm*}
\textbf{\large (}\textbf{\textup{\large Kronecker}}\textbf{\large ):}{\large \par}

{\large Let $\mathcal{F}$ be the set of all $n$-dimensional operators
defined on an euclidean space. Then \[
c_{n}(r):=c_{n}(\mathcal{F},r)=\frac{1}{r^{n}}\]
}{\large \par}
\end{thm*}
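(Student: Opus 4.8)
The plan is to establish the two bounds $c_n(r)\le 1/r^n$ and $c_n(r)\ge 1/r^n$ separately, the first by a singular-value/determinant estimate valid for every operator in $\mathcal F$, the second by exhibiting a single explicit extremal matrix.

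\emph{Upper bound.} Fix any $T\in\mathcal F$ with $\|T\|\le 1$ and $r_{min}(T)\ge r$, and let $s_1\ge s_2\ge\dots\ge s_n>0$ be its singular values, so that $\|T\|=s_1$ and $\|T^{-1}\|=1/s_n$. The only nontrivial input is the identity $\prod_{i=1}^n s_i=\det(T^{*}T)^{1/2}=|\det T|=\prod_{i=1}^n|\lambda_i|$, combined with $|\lambda_i|\ge r_{min}(T)\ge r$, which gives $\prod_i s_i\ge r^n$. Since $s_i\le s_1=\|T\|\le 1$ for all $i$, the partial product $\prod_{i=1}^{n-1}s_i$ is $\le 1$, hence $s_n=\bigl(\prod_{i=1}^n s_i\bigr)\big/\bigl(\prod_{i=1}^{n-1}s_i\bigr)\ge r^n$, so $\|T^{-1}\|=1/s_n\le 1/r^n$. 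Taking the supremum over admissible $T$ yields $c_n(r)\le 1/r^n$.

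\emph{Lower bound.} Here I would simply write down one extremal operator. On $\mathbb C^n$ with its canonical orthonormal basis $(e_k)_{1}^{n}$, let $T$ be the weighted cyclic shift defined by $Te_k=e_{k+1}$ for $1\le k\le n-1$ and $Te_n=r^n e_1$; its matrix is the companion-type matrix with $1$'s on the first subdiagonal and $r^n$ in the upper-right corner. Three facts need checking. First, $T$ carries the orthonormal basis onto the pairwise orthogonal vectors $e_2,\dots,e_n,r^n e_1$, all of norm $\le 1$, so $\|T\|=1$; dually $T^{-1}$ carries the basis onto $r^{-n}e_n,e_1,\dots,e_{n-1}$, so $\|T^{-1}\|=1/r^n$. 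Second, expanding $\det(zI-T)$ along the last column gives $z^n-r^n$, so $\sigma(T)=\{\,r\omega:\omega^n=1\,\}$ and every eigenvalue has modulus exactly $r$, i.e. $r_{min}(T)=r\ge r$. Hence $T$ is admissible and $c_n(r)\ge\|T^{-1}\|=1/r^n$. Combining the two bounds gives $c_n(r)=1/r^n$.

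I do not expect a genuine obstacle, since the statement is elementary; the only places needing care are (i) using the normalization $\|T\|\le 1$ exactly once in the upper bound, precisely to bound the product of the $n-1$ largest singular values by $1$, and (ii) checking in the lower bound that the single weight $r^n$ simultaneously makes the companion matrix a contraction, puts all of its eigenvalues on the circle of radius $r$, and raises $\|T^{-1}\|$ to $1/r^n$. It is worth noting that this extremal $T$ is itself a Toeplitz matrix, though not a triangular (``analytic'') one, which already foreshadows the main theorem of the paper.
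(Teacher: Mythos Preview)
Your argument is correct, but there is nothing in the paper to compare it against: the Kronecker theorem is merely quoted as a classical nineteenth-century result and is not proved anywhere in the text (the ``Proof of the theorem'' in Section~2 refers to the main theorem on $t_n^a(r)$, not to this one). Your upper bound via $\prod_i s_i=|\det T|\ge r^n$ together with $s_i\le\|T\|\le1$ is the standard singular-value proof, and the companion matrix of $z^n-r^n$ is a standard extremal example; both steps are clean.

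Your closing observation is actually worth more than you give it credit for. The weighted shift you wrote down is genuinely Toeplitz (symbol $a_1=1$, $a_{-(n-1)}=r^n$, all other $a_k=0$), so it shows outright that $t_n(r)=c_n(r)=1/r^n$ for the full Toeplitz class~$\mathcal T_n$ --- a sharper statement than the inequality $r^n t_n(r)\ge 1-r^n$ the paper derives from its analytic example $T_r=b_r(M_n)$. This also answers, for $\mathcal T_n$, the question the paper raises about whether Toeplitz matrices occur among the Kronecker extremals. The paper's real target, however, is the \emph{analytic} (lower-triangular) class $\mathcal T_n^a$, where your circulant-type example is unavailable and the Sarason--Sz.-Nagy--Foias interpolation argument is needed.
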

{\large Since obviously the upper bound in $c_{n}(r)$ is attained
(by a compactness argument), a natural question arises: how to describe
the extremal matrices $T$ such that $\left\Vert T\right\Vert \leq1$,
$r_{min}(T)\geq r$ and $\left\Vert T^{-1}\right\Vert =\frac{1}{r^{n}}$.
The answer is contained in N. Nikolski $[1]$ in the following form:
the case of equality \[
\left\Vert T^{-1}\right\Vert =\frac{1}{r^{n}}\]
 occurs for a matrix $T$ with $\parallel T\parallel=1$ if and only
if:}{\large \par}

{\large (1) either $r=1$ and then $T$ is an arbitrary unitary matrix.}{\large \par}

\begin{onehalfspace}
{\large (2) or $r<1$, and then the eigenvalues $\lambda_{j}(T)$
of $T$ are such that \[
\left|\lambda_{j}(T)\right|=r\]
and given $\sigma=\left\{ \lambda_{1},...,\lambda_{n}\right\} $ on
the circle, there exists a unique extremal matrix $T$ (up to a unitary
equivalence) with the spectrum $\left\{ \lambda_{1},...,\lambda_{n}\right\} $
having the form \[
T=U+K\]
where $K$ is a rank one matrix, $U$ is unitary and $U$ and $K$
are both given explicitly. (In fact, $T$ is nothing but the so-called
model operator corresponding to the Blaschke product $B=\Pi_{j=1}^{n}b_{\lambda_{j}}$
, see $[2]$ for definitions).}{\large \par}
\end{onehalfspace}

{\large For numerical analysis, the interest is in some classes of
structured matrices such as Toeplitz, Hankel etc.... In that note,
we are going to focus on the Toeplitz structure. Recall that $T$
is a Toeplitz matrix if and only if there exists a sequence $\left(a_{k}\right)_{k=-n+1}^{k=n-1}$
such that}{\large \par}

{\large \[
T=T_{a}=\left(\begin{array}{ccccc}
a_{0} & a_{-1} & . & . & a_{-n+1}\\
a_{1} & . & . & . & .\\
. & . & . & . & .\\
. & . & . & . & a_{-1}\\
a_{n-1} & . & . & a_{1} & a_{0}\end{array}\right),\]
and that $T$ is an analytic Toeplitz matrix if and only if there
exists a sequence $\left(a_{k}\right)_{k=0}^{k=n-1}$ such that}{\large \par}

{\large \[
T=T_{a}=\left(\begin{array}{ccccc}
a_{0} & 0 & . & . & 0\\
a_{1} & . & . & . & .\\
. & . & . & . & .\\
. & . & . & . & 0\\
a_{n-1} & . & . & a_{1} & a_{0}\end{array}\right).\]
We will denote by $\mathcal{T}_{n}$ the set of Toeplitz matrices
of size $n$ and $\mathcal{T}_{n}^{a}$ will be the set of analytic
Toeplitz matrices of size $n$. This leads us to the following questions.}{\large \par}

{\large How behave the constants $c_{n}(\mathcal{T}_{n},r)$ and $c_{n}(\mathcal{T}_{n}^{a},r)$
when $n\rightarrow\infty$ and/or $r\rightarrow0$? Are they uniformly
comparable with the Kronecker bound $c_{n}(r)$? Are there exist Toeplitz
matrices among extremal matrices described above? The answers seem
not to be obvious, at least the obvious candidates like $T=\frac{\lambda+J_{n}}{\left\Vert \lambda+J_{n}\right\Vert }$,
where $J_{n}$ is the $n-$dimensional Jordan matrix, do not lead
to the needed uniform (in $n$ and $r$) equivalence. For short, we
denote, }{\large \par}

{\large \[
t_{n}(r)=c_{n}(\mathcal{T}_{n},r)\]
and}{\large \par}

{\large \[
t_{n}^{a}(r)=c_{n}(\mathcal{T}_{n}^{a},r).\]
 Obviously we have,}{\large \par}

\begin{onehalfspace}
{\large \[
t_{n}^{a}(r)\leq t_{n}(r)\leq c_{n}(r)=\frac{1}{r^{n}}.\]
The following theorem answers the above questions.}{\large \par}
\end{onehalfspace}
\begin{thm*}
{\large 1) For all $r\in]0,1[$ and $n\geq1$, }{\large \par}

{\large \[
\frac{1}{2}\leq r^{n}t_{n}^{a}(r)\leq r^{n}c_{n}(r)=1\]
2) For every $n\geq1$}{\large \par}

{\large \[
lim_{r\rightarrow0}r^{n}t_{n}^{a}(r)=lim_{r\rightarrow1}r^{n}t_{n}^{a}(r)=1\]
and for every $0<r\leq1$ }{\large \par}

\begin{onehalfspace}
{\large \[
lim_{n\rightarrow\infty}r^{n}t_{n}^{a}(r)=1.\]
}\textup{\large The proof of the theorem is given in section 2 below.}{\large \par}\end{onehalfspace}

\end{thm*}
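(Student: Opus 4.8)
The plan is to establish the sharper identity $r^{n}t_{n}^{a}(r)=1$ for every $n\geq1$ and $r\in\,]0,1[$, which contains both assertions of the theorem (the limits in part 2 then being trivial). Since the upper bound $r^{n}t_{n}^{a}(r)\leq r^{n}c_{n}(r)=1$ is already recorded above, the whole task is the lower bound $t_{n}^{a}(r)\geq 1/r^{n}$, and for that it suffices to produce, for one $\lambda$ with $|\lambda|=r$, a single $T\in\mathcal{T}_{n}^{a}$ with $\|T\|\leq1$, $r_{min}(T)=r$ and $\|T^{-1}\|=1/r^{n}$.

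First I would translate the problem into $H^{\infty}$. Identify $\mathbb{C}^{n}$ with the model space $K_{n}:=H^{2}\ominus z^{n}H^{2}$ via the orthonormal basis $1,z,\dots ,z^{n-1}$, and let $S$ be the compressed shift $P_{K_{n}}M_{z}|_{K_{n}}$; then the lower triangular Toeplitz matrix with symbol $\varphi=\sum_{k}a_{k}z^{k}\in H^{\infty}$ is precisely $\varphi(S)$, and conversely every element of $\mathcal{T}_{n}^{a}$ arises this way. Two observations make this useful: $S$ is nilpotent, so $\sigma(\varphi(S))=\{\varphi(0)\}$ and hence $r_{min}(\varphi(S))=|\varphi(0)|$; and, by the Sarason--Sz.-Nagy--Foias commutant lifting theorem, $\|\varphi(S)\|=\mathrm{dist}_{H^{\infty}}(\varphi ,z^{n}H^{\infty})=\inf_{h\in H^{\infty}}\|\varphi+z^{n}h\|_{\infty}$. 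If $\varphi(0)\neq0$ and $\psi\in H^{\infty}$ satisfies $\varphi\psi\equiv1\ (\mathrm{mod}\ z^{n}H^{\infty})$, then $\varphi(S)^{-1}=\psi(S)$, hence likewise $\|\varphi(S)^{-1}\|=\mathrm{dist}_{H^{\infty}}(\psi ,z^{n}H^{\infty})$. In these terms $t_{n}^{a}(r)$ is exactly the $H^{\infty}$ extremal quantity $\sup\{\mathrm{dist}_{H^{\infty}}(\psi ,z^{n}H^{\infty})\}$ taken over all $\varphi$ with $\mathrm{dist}_{H^{\infty}}(\varphi ,z^{n}H^{\infty})\leq1$ and $|\varphi(0)|\geq r$ --- the interpolation problem of the title.

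For the lower bound I would choose $\varphi=b_{\lambda}$, the single Blaschke factor $b_{\lambda}(z)=(\lambda-z)/(1-\bar{\lambda}z)$ with $|\lambda|=r$, and $T=b_{\lambda}(S)$: then $\|T\|\leq\|b_{\lambda}\|_{\infty}=1$ and $r_{min}(T)=|b_{\lambda}(0)|=r$, so $T$ is admissible. It remains to evaluate $\|T^{-1}\|=\mathrm{dist}_{H^{\infty}}(\psi ,z^{n}H^{\infty})$ with $\psi b_{\lambda}\equiv1\ (\mathrm{mod}\ z^{n})$. Writing $\psi b_{\lambda}=1+z^{n}g$ with $g\in H^{\infty}$ and $g(\lambda)=-\lambda^{-n}$, one has $\psi+z^{n}h=(1+z^{n}G)/b_{\lambda}$ with $G=g+b_{\lambda}h$ ranging exactly over $\{G\in H^{\infty}:G(\lambda)=-\lambda^{-n}\}$; since $|b_{\lambda}|=1$ on the circle, this reduces the computation to the Nevanlinna--Pick extremal problem
\[
\|T^{-1}\|=\inf\{\|F\|_{\infty}:F\in H^{\infty},\ F\equiv1\ (\mathrm{mod}\ z^{n}),\ F(\lambda)=0\},
\]
i.e. interpolation of the constant $1$ to order $n$ at the origin together with a zero at $\lambda$. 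I would then show this infimum equals $1/|\lambda|^{n}=1/r^{n}$ by exhibiting an extremal $F$ (a unimodular multiple of a degree-$n$ Blaschke product vanishing at $\lambda$, whose remaining $n-1$ zeros are pinned down by the conditions at $0$) and matching it with the corresponding Pick-matrix lower bound; equivalently, one may observe that the composition operator $f\mapsto(f\circ b_{\lambda})\sqrt{1-|\lambda|^{2}}/(1-\bar{\lambda}z)$ unitarily identifies $b_{\lambda}(S)$ with the model operator corresponding to $b_{\lambda}^{n}$, whose inverse has norm $1/r^{n}$ by Kronecker--Nikolski. Either way $t_{n}^{a}(r)=1/r^{n}$, which gives the theorem.

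The single genuinely hard step is the exact evaluation of that Nevanlinna--Pick quantity. The elementary lower bounds $\mathrm{dist}_{H^{\infty}}(\psi ,z^{n}H^{\infty})\geq|\widehat{\psi}(n-1)|=(1-r^{2})/r^{n}$ and $\mathrm{dist}_{H^{\infty}}(\psi ,z^{n}H^{\infty})\geq\big(\sum_{k<n}|\widehat{\psi}(k)|^{2}\big)^{1/2}=\big(1+(1-r^{2})r^{-2n}\big)^{1/2}$ are enough, together with the trivial competitor $T=rI$ (which gives $r^{n}t_{n}^{a}(r)\geq r^{n-1}$), to settle the limits $\lim_{r\to0}$ and $\lim_{r\to1}$ in part 2; but they degrade for $r$ bounded away from $0$ and $1$ and, by themselves, deliver neither the uniform constant $1/2$ of part 1 nor $\lim_{n\to\infty}r^{n}t_{n}^{a}(r)=1$ --- for those one really needs the sharp value $1/r^{n}$ produced by the commutant lifting / Nevanlinna--Pick analysis.
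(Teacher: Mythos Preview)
Your plan and the paper share the same test matrix $T_{r}=b_{r}(M_{n})$ and the same reduction of $\|T_{r}^{-1}\|$ to an $H^{\infty}$ extremal problem via commutant lifting. The divergence is in how that extremal problem is handled.

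The paper does \emph{not} compute the exact value. After rewriting
\[
\|T_{r}^{-1}\|=\inf\bigl\{\|1-z^{n}h\|_{\infty}:h\in H^{\infty},\ r^{n}h(r)=1\bigr\},
\]
it applies the one-line triangle inequality
\[
\|1-z^{n}h\|_{\infty}\ \geq\ \|h\|_{\infty}-1\ \geq\ |h(r)|-1\ =\ r^{-n}-1,
\]
and combines this with the trivial $\|T_{r}^{-1}\|\geq 1$ to obtain
\[
r^{n}t_{n}^{a}(r)\ \geq\ \max\bigl(r^{n},\,1-r^{n}\bigr)\ \geq\ \tfrac{1}{2}.
\]
All three limits then drop out immediately: $1-r^{n}\to 1$ as $r\to 0$ or as $n\to\infty$, and $r^{n}\to 1$ as $r\to 1$. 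So your final paragraph is mistaken: the coefficient bounds you list indeed do not suffice, but the equally elementary lower bound $1-r^{n}$ (which you overlooked) does. No sharp Nevanlinna--Pick computation is needed for the theorem as stated.

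Your route instead targets the exact identity $\|b_{\lambda}(M_{n})^{-1}\|=1/r^{n}$, hence $t_{n}^{a}(r)=c_{n}(r)$. The composition-operator argument you sketch is correct: the unitary $Uf=(f\circ b_{\lambda})\,\sqrt{1-|\lambda|^{2}}/(1-\bar{\lambda}z)$ carries $K_{z^{n}}$ onto $K_{b_{\lambda}^{n}}$ and, since $b_{\lambda}\circ b_{\lambda}=z$, intertwines $b_{\lambda}(M_{n})=P_{K_{z^{n}}}M_{b_{\lambda}}|_{K_{z^{n}}}$ with the model operator $M_{b_{\lambda}^{n}}=P_{K_{b_{\lambda}^{n}}}M_{z}|_{K_{b_{\lambda}^{n}}}$; and $\|M_{b_{\lambda}^{n}}^{-1}\|=1/r^{n}$ by the Nikolski extremal characterisation quoted in the introduction. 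This is genuinely \emph{stronger} than what the paper proves --- indeed it contradicts the paper's closing Remark, which conjectures $t_{n}^{a}(r)<c_{n}(r)$. So your plan, once the sketch is filled in, would improve on the paper; but for the theorem itself the paper's two-line estimate is all that is required, and you should not present the sharp evaluation as necessary.
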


\section{\textbf{The operator $M_{n}$ and its commutant}}

{\large Let $M_{n}:\,\left(\mathbb{C}^{n},\,<.,.>\right)\longrightarrow\left(\mathbb{C}^{n},\,<.,.>\right)$
be the nilpotent Jordan Block of size $n$ \[
M_{n}=\left(\begin{array}{ccccc}
0\\
1 & .\\
 & . & .\\
 &  & . & .\\
 &  &  & 1 & 0\end{array}\right).\]
It is well known that the commutant $\left\{ M_{n}\right\} ^{'}=\left\{ A\in\mathcal{M}_{n}(\mathbb{C}):\, AM_{n}=M_{n}A\right\} $
of $M_{n}$ verifies \[
\left\{ M_{n}\right\} ^{'}=\left\{ p\left(M_{n}\right):\, p\in Pol_{+}\right\} ,\]
where $Pol_{+}$ is the space of analytic polynomials. On the other
hand, we can state this fact in the following way. Let \[
K_{z^{n}}=\left(z^{n}H^{2}\right)^{\perp}=Lin\left(1,\: z,...,\: z^{n-1}\right),\]
$H^{2}$ being the standard Hardy space in the disc $\mathbb{D}=\left\{ z:\,\vert z\vert<1\right\} $,
and \[
M_{z^{n}}:\, K_{z^{n}}\rightarrow K_{z^{n}}\]
 such that}{\large \par}

\begin{onehalfspace}
{\large \[
M_{z^{n}}f=P_{z^{n}}(zf),\,\forall f\in K_{z^{n}}.\]
 Then the matrix of $M_{z^{n}}$ in the orthonormal basis of $K_{z^{n}}$,
$B_{n}=\left\{ 1,\: z,...,\: z^{n-1}\right\} $ is exactly $M_{n}$,
and hence \[
\left\{ p\left(M_{n}\right),\, p\in Pol_{+}\right\} =\left\{ M_{n}\right\} ^{'}=\left\{ M_{z^{n}}\right\} ^{'}\]
The following straightforward link between $n\times n$ analytic Toeplitz
matrices and $\{M_{n}\}'$ is well known.}{\large \par}
\end{onehalfspace}
\begin{lem}
{\large $\mathcal{T}_{n}^{a}=\left\{ M_{n}\right\} ^{'}.$}{\large \par}\end{lem}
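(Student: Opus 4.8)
The plan is to prove the two inclusions separately, but in fact both directions will follow from a single explicit computation identifying, for a polynomial $p(z) = \sum_{k=0}^{n-1} a_k z^k$, the matrix of $p(M_n)$ in the standard basis $e_1, \dots, e_n$ of $\mathbb{C}^n$. First I would recall that $M_n$ is the down-shift: $M_n e_j = e_{j+1}$ for $1 \le j \le n-1$ and $M_n e_n = 0$, so that $M_n^k e_j = e_{j+k}$ when $j+k \le n$ and $M_n^k e_j = 0$ otherwise. Consequently $M_n^k$ is the matrix with $1$'s on the $k$-th subdiagonal and $0$'s elsewhere, and these matrices $I, M_n, M_n^2, \dots, M_n^{n-1}$ are linearly independent (their supports are disjoint). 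Summing, $p(M_n) = \sum_{k=0}^{n-1} a_k M_n^k$ is exactly the lower-triangular Toeplitz matrix $T_a$ with first column $(a_0, a_1, \dots, a_{n-1})^t$ displayed in the excerpt. This already shows $\{p(M_n) : p \in \mathrm{Pol}_+\} \subseteq \mathcal{T}_n^a$, and conversely every analytic Toeplitz matrix $T_a$ arises this way by reading off $a_0, \dots, a_{n-1}$ and setting $p(z) = \sum a_k z^k$; note $a_k = 0$ for $k \ge n$ is automatic since $M_n^n = 0$, so we may take $\deg p \le n-1$ without loss. Hence $\mathcal{T}_n^a = \{p(M_n) : p \in \mathrm{Pol}_+\}$.

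It then remains to invoke the identity $\{p(M_n) : p \in \mathrm{Pol}_+\} = \{M_n\}'$ already recorded in the text just above the lemma; combining the two displayed equalities yields $\mathcal{T}_n^a = \{M_n\}'$. If one wishes to keep the argument self-contained, the inclusion $\{p(M_n)\} \subseteq \{M_n\}'$ is trivial since powers of $M_n$ commute with $M_n$, and the reverse inclusion is the standard fact that the commutant of a nonderogatory (cyclic) matrix coincides with the polynomials in it: if $A M_n = M_n A$, then writing $A e_1 = \sum_{k=0}^{n-1} a_{k+1} e_{k+1}$ and applying $A$ to $e_j = M_n^{j-1} e_1$ gives $A e_j = M_n^{j-1} A e_1 = p(M_n) e_j$ for the polynomial $p(z) = \sum_{k=0}^{n-1} a_{k+1} z^k$, so $A = p(M_n)$.

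The argument is entirely elementary and I do not expect a genuine obstacle; the only point requiring a little care is the bookkeeping of indices — matching the "$k$-th subdiagonal" pattern of $M_n^k$ with the Toeplitz display in the statement, and making sure the cyclic-vector argument uses $e_1$ (the generator of $\mathbb{C}^n$ under $M_n$) rather than $e_n$. One should also note explicitly that since $\dim \mathbb{C}^n = n$ and $M_n$ is nilpotent of index $n$, restricting to polynomials of degree $\le n-1$ costs nothing, which is what makes the identification with the finite family $(a_k)_{k=0}^{n-1}$ exact rather than merely surjective.
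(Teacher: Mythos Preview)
Your proposal is correct and follows essentially the same route as the paper: both arguments compute $p(M_n)=\sum_{k=0}^{n-1}a_k M_n^k$ explicitly to see it is the lower-triangular Toeplitz matrix with first column $(a_0,\dots,a_{n-1})^t$, read off the converse, and then invoke the identity $\{M_n\}'=\{p(M_n):p\in\mathrm{Pol}_+\}$ recorded just before the lemma. The only difference is that you supply, as an optional extra, the standard cyclic-vector argument for that commutant identity, whereas the paper simply cites it as well known.
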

\begin{proof}
{\large Let \[
\phi(z)=\sum_{k\geq0}\hat{\phi}(k)z^{k}.\]
Then, \[
\phi\left(M_{n}\right)=\sum_{k=0}^{n-1}\hat{\phi}(k)M_{n}^{k}=\left(\begin{array}{ccccc}
\hat{\phi(}0)\\
\hat{\phi(}1) & .\\
. & . & .\\
. &  & . & .\\
\hat{\phi(}n-1) & . & . & \hat{\phi(}1) & \hat{\phi(}0)\end{array}\right).\]
Conversely, if $A=\left(\begin{array}{ccccc}
a_{0} & 0 & . & . & 0\\
a_{1} & . & . & . & .\\
. & . & . & . & .\\
. & . & . & . & 0\\
a_{n-1} & . & . & a_{1} & a_{0}\end{array}\right)\in\mathcal{T}_{n}^{a}$ then $A=\left(\sum_{k=0}^{n-1}a_{k}z^{k}\right)\left(M_{n}\right).$}{\large \par}
\end{proof}
{\large We also need the Schur-Caratheodory interpolation theorem
(1912), which also can be considered as a partial case of the commutant
lifting theorem of Sarason and Sz-Nagy-Foias (1968) see $[2]$ p.230
Theorem 3.1.11.}{\large \par}
\begin{prop}
{\large The following are equivalent.}{\large \par}

{\large i) $T$ is an $n\times n$ analytic Toeplitz matrix.}{\large \par}

{\large ii) There exists $g\in H^{\infty}$ such that $T=g\left(M_{n}\right)$.}{\large \par}

{\large Moreover}{\large \par}

{\large \[
\parallel T\parallel=inf\left\{ \parallel g\parallel_{\infty}:\, g\in H^{\infty}(\mathbb{D})\,,\, g\left(M_{n}\right)=T\right\} \]
}{\large \par}

{\large \[
=min\left\{ \parallel g\parallel_{\infty}:\, g\in H^{\infty}(\mathbb{D})\,,\, g\left(M_{n}\right)=T\right\} ,\]
where $\parallel g\parallel_{\infty}=sup_{z\in\mathbb{T}}\vert g(z)\vert.$ }{\large \par}
\end{prop}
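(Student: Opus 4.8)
The plan is to obtain the equivalence i)$\Leftrightarrow$ii) directly from Lemma 1 together with the identity $\{M_n\}'=\{p(M_n):p\in Pol_+\}$ recalled above, and then to deduce the norm formula from the commutant lifting theorem of Sarason and Sz.-Nagy--Foias.

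For i)$\Rightarrow$ii): if $T$ is an $n\times n$ analytic Toeplitz matrix then, by Lemma 1, $T\in\{M_n\}'$, so $T=p(M_n)$ for some analytic polynomial $p\in Pol_+\subset H^\infty$. For ii)$\Rightarrow$i): for any $g\in H^\infty$ the operator $g(M_n)$ makes sense since $M_n^n=0$, and $g(M_n)=\sum_{k=0}^{n-1}\hat g(k)M_n^k$ is, by inspection (exactly as in the proof of Lemma 1), a lower-triangular Toeplitz matrix, hence lies in $\mathcal{T}_n^a$. At the same time I would record the identification $g(M_n)=P_{z^n}M_g|_{K_{z^n}}$, where $M_g$ denotes multiplication by $g$ on $H^2$: writing $g=\sum_{k<n}\hat g(k)z^k+z^nh$ with $h\in H^2$, the term $z^nhf$ lies in $z^nH^2=(K_{z^n})^\perp$ for every $f\in K_{z^n}$, so $P_{z^n}$ kills it and only the polynomial part survives; this uses that $K_{z^n}$ is $M_z^*$-invariant, equivalently that $(P_{z^n}M_z|_{K_{z^n}})^k=P_{z^n}M_z^k|_{K_{z^n}}$ on $K_{z^n}$.

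This identification at once gives the easy inequality $\|T\|\le\inf\{\|g\|_\infty:g(M_n)=T\}$, since a compression never increases the operator norm: $\|T\|=\|P_{z^n}M_g|_{K_{z^n}}\|\le\|M_g\|=\|g\|_\infty$ for every $g$ with $g(M_n)=T$. For the reverse inequality, and for the fact that the infimum is attained (so that it is a minimum), I would invoke commutant lifting: since $T\in\{M_n\}'=\{M_{z^n}\}'$, the operator $T$ on $K_{z^n}$ commutes with the model operator $M_{z^n}$, and the Sarason / Sz.-Nagy--Foias theorem (the case of the inner function $\Theta=z^n$) produces $g\in H^\infty(\mathbb{D})$ with $\|g\|_\infty=\|T\|$ and $T=P_{z^n}M_g|_{K_{z^n}}=g(M_n)$. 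Combining the two bounds yields $\|T\|=\min\{\|g\|_\infty:g(M_n)=T\}=\inf\{\|g\|_\infty:g(M_n)=T\}$.

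The only genuinely non-elementary ingredient is this last appeal to the commutant lifting theorem; everything else is the routine dictionary between analytic Toeplitz matrices and functions of the truncated shift $M_{z^n}$ on the model space $K_{z^n}$. The one step deserving a sentence of care is the compatibility of the two descriptions of $g(M_n)$ — the finite Taylor sum $\sum_{k<n}\hat g(k)M_n^k$ versus the compression $P_{z^n}M_g|_{K_{z^n}}$ — which is precisely the orthogonality observation noted above.
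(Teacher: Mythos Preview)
Your argument is correct and is exactly the route the paper itself points to. Note, however, that the paper does not give its own proof of this proposition: it simply records it as the Schur--Carath\'eodory interpolation theorem, viewed as a special case of the Sarason/Sz.-Nagy--Foias commutant lifting theorem, with a reference to \cite{key-2}. What you have written is precisely the standard unpacking of that reference in the case $\Theta=z^n$: the equivalence i)$\Leftrightarrow$ii) from Lemma~1 and the commutant description $\{M_n\}'=\{p(M_n):p\in Pol_+\}$, the easy bound $\|T\|\le\|g\|_\infty$ via the compression identity $g(M_n)=P_{z^n}M_g|_{K_{z^n}}$, and the attainment of the infimum from commutant lifting. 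So there is no discrepancy to report; you have supplied the details the paper deliberately left to the cited source.
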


\section{\textbf{Proof of the theorem}}
\begin{lem}
{\large Let $T$ be an invertible analytic Toeplitz matrix of size
$n\times n$ (which means that there exists $f\in Pol_{+}\subset H^{\infty}$
such that $T=f\left(M_{n}\right)$). Then}{\large \par}

{\large \[
\left\Vert T^{-1}\right\Vert =inf\left\{ \parallel g\parallel_{\infty}:\, g,\, h\in H^{\infty},\, fg+z^{n}h=1\right\} .\]
}{\large \par}\end{lem}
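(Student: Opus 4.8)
The plan is to identify $T^{-1}$ as an operator in the commutant $\{M_n\}'$ and apply Proposition 1.3 (the Schur--Carath\'eodory / commutant lifting characterization) to it. Since $T=f(M_n)$ with $f\in Pol_+$ and $T$ is invertible, $T^{-1}$ also commutes with $M_n$, so by Lemma 1.2 it is itself an analytic Toeplitz matrix; hence by Proposition 1.3 we have
\[
\left\Vert T^{-1}\right\Vert =\inf\left\{\parallel g\parallel_\infty:\, g\in H^\infty,\, g(M_n)=T^{-1}\right\}.
\]
So the whole task reduces to rewriting the constraint $g(M_n)=T^{-1}$, equivalently $f(M_n)g(M_n)=I$, in terms of the Hardy space $H^\infty$ rather than the matrix algebra.

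The key step is the translation $g(M_n)=T^{-1}\iff (fg)(M_n)=I$. Here I would use the functional calculus identification already recalled in Section 1: the algebra homomorphism $H^\infty\to\{M_n\}'$, $\varphi\mapsto\varphi(M_n)$, factors through the quotient $H^\infty/z^nH^\infty$ (because $M_n^n=0$, i.e. $M_{z^n}$ is the compression of multiplication to $K_{z^n}=H^2\ominus z^nH^2$ and $z^n$ annihilates it). Concretely, $\varphi(M_n)=\psi(M_n)$ if and only if $\varphi-\psi\in z^nH^\infty$. Applying this with $\varphi=fg$ and $\psi=1$: $(fg)(M_n)=I$ if and only if $fg-1\in z^nH^\infty$, i.e. there exists $h\in H^\infty$ with $fg+z^nh=1$ (absorbing a sign into $h$). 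Combining with the displayed formula for $\|T^{-1}\|$ gives exactly
\[
\left\Vert T^{-1}\right\Vert=\inf\left\{\parallel g\parallel_\infty:\, g,h\in H^\infty,\ fg+z^nh=1\right\},
\]
which is the claim.

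I should also check the two non-formal points hidden in the above. First, that $g$ ranging over $H^\infty$ rather than $Pol_+$ does not shrink the infimum: given any $g\in H^\infty$ with $g(M_n)=T^{-1}$, its Taylor polynomial $\tilde g=\sum_{k=0}^{n-1}\hat g(k)z^k$ satisfies $\tilde g(M_n)=g(M_n)=T^{-1}$ as well and $\|\tilde g\|_\infty$ need not exceed $\|g\|_\infty$ in general, but this is not needed — the equality $\|T^{-1}\|=\inf\{\|g\|_\infty:g(M_n)=T^{-1}\}$ over $g\in H^\infty$ is \emph{exactly} what Proposition 1.3 asserts, so nothing extra is required. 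Second, one must know a priori that the set over which we take the infimum is nonempty, i.e. that \emph{some} $g\in H^\infty$ with $fg+z^nh=1$ exists; but this follows because $T$ is assumed invertible, so $T^{-1}\in\{M_n\}'=\{\psi(M_n):\psi\in Pol_+\}$, giving a polynomial $g$ with $g(M_n)=T^{-1}$, hence $(fg-1)(M_n)=0$ and the corresponding $h$.

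The main obstacle is really just the clean statement of the functional-calculus fact ``$\varphi(M_n)=\psi(M_n)\iff \varphi\equiv\psi\ (\mathrm{mod}\ z^nH^\infty)$''; once that is in place the proof is a one-line substitution. The direction $\varphi-\psi\in z^nH^\infty\Rightarrow\varphi(M_n)=\psi(M_n)$ is immediate since $M_n^n=0$ (write $\varphi-\psi=z^n u$ and expand $u$ in a power series, or use that $M_{z^n}=P_{K_{z^n}}M_z|_{K_{z^n}}$ and $z^n\cdot K_{z^n}\perp K_{z^n}$). The reverse direction uses that the Taylor coefficients $\hat\varphi(0),\dots,\hat\varphi(n-1)$ are recovered from the lower-triangular Toeplitz matrix $\varphi(M_n)$ (its first column), exactly as displayed in the proof of Lemma 1.2; so $\varphi(M_n)=\psi(M_n)$ forces $\hat\varphi(k)=\hat\psi(k)$ for $0\le k\le n-1$, i.e. $\varphi-\psi\in z^nH^\infty$.
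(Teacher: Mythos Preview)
Your proof is correct and follows essentially the same route as the paper: use that $T^{-1}\in\{M_n\}'$, apply the Schur--Carath\'eodory/commutant lifting identity to write $\|T^{-1}\|=\inf\{\|g\|_\infty:g(M_n)=T^{-1}\}$, and then translate the constraint $(fg)(M_n)=I$ into the Bezout form $fg+z^nh=1$ via the fact that the kernel of $\varphi\mapsto\varphi(M_n)$ is $z^nH^\infty$. Your write-up is in fact more careful than the paper's (you spell out both directions of $\varphi(M_n)=\psi(M_n)\iff\varphi-\psi\in z^nH^\infty$ and check non-emptiness), though note your numbering is off by one relative to the paper (Lemma~1 and Proposition~2, not 1.2 and 1.3).
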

\begin{proof}
{\large Since $T^{-1}$belongs also to $\{M_{n}\}'$, there exists
$g\in Pol_{+}\subset H^{\infty}$ such that $T^{-1}=g\left(M_{n}\right)$.
This implies in particular that}{\large \par}

{\large \[
\left(fg\right)\left(M_{n}\right)=I_{n},\]
 which means that $fg-1$ annihilates $M_{n}$. That means that \[
fg-1\]
 is a multiple of $z^{n}$ in $H^{\infty}$. Conversely, if $g\in H^{\infty}$
verifies the above Bezout equation with $h\in H^{\infty}$ then}{\large \par}

{\large \[
g\left(M_{n}\right)=T^{-1}.\]
But by Proposition 1.2 we have}{\large \par}

\textit{\large \[
\left\Vert T^{-1}\right\Vert =inf\left\{ \parallel g\parallel_{\infty}:\, g\in H^{\infty},\, g\left(M_{n}\right)=T^{-1}\right\} ,\]
}{\large and hence}{\large \par}

\textit{\large \[
\left\Vert T^{-1}\right\Vert =inf\left\{ \parallel g\parallel_{\infty}:\, g,\, h\in H^{\infty},\, fg+z^{n}h=1\right\} .\]
}{\large \par}
\end{proof}
\textbf{\large Proof of the theorem.}{\large{} First, we prove that
for every $r\in]0,1[$ there exists an analytic $n\times n$ Toeplitz
matrix $T_{r}$ such that }{\large \par}

{\large 1 ) $\left\Vert T_{r}\right\Vert \leq1,$ }{\large \par}

{\large 2) $\sigma\left(T_{r}\right)=\left\{ r\right\} ,$}{\large \par}

{\large 3) $\left\Vert T_{r}^{-1}\right\Vert \geq\frac{1}{r^{n}}-1.$}{\large \par}

\begin{onehalfspace}
{\large Indeed, let \[
b_{r}(z)=\frac{r-z}{1-rz}\in H^{\infty}\]
 be the Blaschke factor corresponding to $r$. The $H^{\infty}$ calculus
of $M_{n}$ tells us that the operator \[
T_{r}:=b_{r}\left(M_{n}\right)\]
 satisfies property 1):}{\large \par}
\end{onehalfspace}

{\large \[
\left\Vert T_{r}\right\Vert \leq\left\Vert b_{r}\right\Vert {}_{\infty}=1.\]
 On the other hand, by the spectral mapping theorem }{\large \par}

\begin{onehalfspace}
{\large \[
\sigma\left(T_{r}\right)=\left\{ b_{r}\left(\sigma\left(M_{n}\right)\right)\right\} =\left\{ b_{r}(0)\right\} =\left\{ r\right\} .\]
In particular this proves that $T_{r}$ is invertible. Finally, using
Lemma 2.3, we get}{\large \par}
\end{onehalfspace}

\textit{\large \[
\left\Vert T_{r}^{-1}\right\Vert =inf\left\{ \parallel g\parallel_{\infty}:\, g,\, h\in H^{\infty},\, b_{r}g+z^{n}h=1\right\} =\]
}{\large \par}

{\large \[
=inf\left\{ \left\Vert \frac{1-z^{n}h}{b_{r}}\right\Vert _{\infty}:\, h\in H^{\infty},\, r^{n}h(r)=1\right\} =\]
}{\large \par}

\begin{onehalfspace}
{\large \[
=inf\left\{ \parallel1-z^{n}h\parallel_{\infty}:\, h\in H^{\infty},\, r^{n}h(r)=1\right\} .\]
But if $h\in H^{\infty}$and $r^{n}h(r)=1$, we have }{\large \par}
\end{onehalfspace}

{\large \[
\parallel1-z^{n}h\parallel_{\infty}\geq\parallel h\parallel_{\infty}-1\]
and}{\large \par}

{\large \[
\parallel h\parallel_{\infty}\geq\vert h(r)\vert=\frac{1}{r^{n}},\]
which gives}{\large \par}

{\large \[
\parallel1-z^{n}h\parallel_{\infty}\geq\frac{1}{r^{n}}-1.\]
Therefore}{\large \par}

\begin{onehalfspace}
{\large \[
\left\Vert T_{r}^{-1}\right\Vert \geq\frac{1}{r^{n}}-1,\]
which completes the proof of property 3) of $T_{r}$.}{\large \par}
\end{onehalfspace}

{\large Now we obtain}{\large \par}

\begin{onehalfspace}
{\large \[
1-r^{n}\leq r^{n}\left\Vert T_{r}^{-1}\right\Vert \parallel\leq r^{n}t_{n}^{a}(r)\leq r^{n}t_{n}(r)\leq r^{n}c_{n}(r)=1\]
for every $r\in]0,1[$. On the other hand, we have $\left\Vert T_{r}^{-1}\right\Vert \left\Vert T_{r}\right\Vert \geq1$
and hence}{\large \par}
\end{onehalfspace}

{\large \[
\left\Vert T_{r}^{-1}\right\Vert \geq\frac{1}{\left\Vert T_{r}\right\Vert }\geq1.\]
As a result for all $r\in]0,1[$, }{\large \par}

\begin{onehalfspace}
{\large \[
r^{n}\left\Vert T_{r}^{-1}\right\Vert \geq r^{n},\]
and combining with the previous estimate, we obtain}{\large \par}

{\large \[
\frac{1}{2}\le max(r^{n},1-r^{n})\leq r^{n}\left\Vert T_{r}^{-1}\right\Vert \leq r^{n}t_{n}^{a}(r)\leq r^{n}t_{n}(r)\leq r^{n}c_{n}(r)=1,\]
which completes the first claim of the theorem. The second claim follows
from the previous inequalities.}{\large \par}
\end{onehalfspace}

\begin{flushright}
{\large $\square$}
\par\end{flushright}{\large \par}
\begin{rem*}
{\large It should be mention that we have not obtained an explicit
formula for $t_{n}^{a}(r)$. Regarding the description of extremal
matrices (for the quantity $c_{n}(r)$) mentioned in the Introduction,
it seems likely that $t_{n}^{a}(r)<c_{n}(r)=\frac{1}{r^{n}}.$ In
the same spirit, it would be of interest to know the limits $lim_{r\rightarrow1}\left(inf_{n\geq1}r^{n}t_{n}^{a}(r)\right)$
and $lim_{n\rightarrow\infty}\left(inf_{0<r<1}r^{n}t_{n}^{a}(r)\right).$}{\large \par}\end{rem*}

\begin{onehalfspace}

\lyxrightaddress{{\large Equipe d'Analyse \& Géométrie, }}
\end{onehalfspace}

\lyxrightaddress{{\large Institut de Mathématiques de Bordeaux, }}

\lyxrightaddress{{\large Université Bordeaux, 351 Cours de la Libération, 33405 Talence,
France.}}

\lyxrightaddress{\textit{\large E-mail address}{\large : rzarouf@math.u-bordeaux1.fr}}
\end{document}